	\definecolor{lightblue}{rgb}{.60,.60,1}
	\definecolor{lightred}{rgb}{1, .60,.60}
\tikzset{
    >=stealth',
    imp/.style={
           ->,
           shorten <=2pt,
           shorten >=2pt,}
}
\newcommand{\seq}[1]{{\left\langle{#1}\right\rangle}}
\newcommand{\rest}[1]{\! \upharpoonright_{#1}}
\newcommand{\w}{\omega}
\newcommand \s{\sigma}
\renewcommand{\le}{\leqslant}
\renewcommand{\ge}{\geqslant}
\newcommand \Tur{\textup{\scriptsize T}}
\newcommand{\leb}{\lambda}
\newcommand{\UU}{{\mathcal U}}
\newcommand{\PP}{\mathcal P}
\newcommand{\CC}{\mathcal C}
\newcommand{\LRH}{\textup{\textsf{LRH}}}
\theoremstyle{plain}
\newtheorem{theorem}{Theorem}
\newtheorem{corollary}[theorem]{Corollary} 
\newtheorem{question}[theorem]{Question}
\theoremstyle{definition}
\theoremstyle{remark}
\title[Computing $K$-trivial sets by incomplete random sets]{Computing {\boldmath $K$}-trivial sets by\\ incomplete random sets}
\author[Bienvenu]{Laurent Bienvenu}
\author[Day]{Adam R.~Day}
\address{Department of Mathematics\\
University of California, Berkeley\\
Berkeley, CA 94720-3840, USA}
\email{adam.day@math.berkeley.edu}
\author[Greenberg]{Noam Greenberg}
\address{School of Mathematics, Statistics and Operations Research, Victoria University of Wellington, Wellington, New Zealand}
\email{greenberg@msor.vuw.az.nz}
\author[Ku\v{c}era]{Anton{\'\i}n Ku\v{c}era}
\address{Charles University in Prague,  Faculty of Mathematics and Physics, Prague, Czech Republic}
\email{kucera@mbox.ms.mff.cuni.cz}
\author[Miller]{Joseph S.~Miller}
\address{Department of Mathematics\\
University of Wisconsin\\
Madison, WI 53706-1388, USA}
\email{jmiller@math.wisc.edu}
\author[Nies]{Andr\'{e} Nies}
\address{Department of Computer Science, Private Bag 92019, Auckland, New Zealand}
\email{andre@cs.auckland.ac.nz}
\author[Turetsky]{Dan Turetsky}
\address{Kurt G\"odel Research Center for Mathematical Logic\\
University of Vienna\\
Vienna, Austria}
\email{turetsd4@univie.ac.at}
\begin{document} 	

\begin{abstract}
Every $K$-trivial set is computable from an incomplete Martin-L\"{o}f random set, i.e., a Martin-L\"{o}f random set that does not compute $\emptyset'$. 
\end{abstract}

\thanks{
Day was supported by a Miller Research Fellowship in the Department of Mathematics at the University of California, Berkeley.
Greenberg was supported by the Marsden Fund of New Zealand, and by a Rutherford Discovery Fellowship. Miller was supported by the National Science Foundation under grant DMS-1001847. Nies was supported by the Marsden Fund. Turetsky was supported by the Marsden Fund, via a postdoctoral scholarship. A major part of the work by Bienvenu, Greenberg, Ku\v{c}era, Nies and Turetsky was performed during a research-in-pairs stay at the Mathematisches Forschungsinstitut Oberwolfach. 
}

\maketitle

A major objective in algorithmic randomness is to understand how random sets   and computably enumerable (c.e.)  sets
 interact  within the Turing degrees. 
 At some level of randomness all interesting  interactions cease. Computably enumerable sets are each definable and, in a sense, they are as effective as any noncomputable set can be. 
 As a consequence, the lower and upper cones of noncomputable c.e.\ sets are definable null sets, and thus if a set is ``sufficiently'' random, it cannot compute, nor be computed by, a noncomputable c.e.\ set. 
However,  the most studied notion of algorithmic randomness, Martin-L\"{o}f randomness, is not strong enough to support 
this argument, and in fact, significant interactions between Martin-L\"{o}f random sets and c.e.\ sets occur. The study of these interactions has lead to a number of surprising results that show a remarkably robust relationship between Martin-L\"{o}f random sets and the class of $K$-trivial sets. Interestingly, the significant interaction occurs ``at the boundaries'': the Martin-L\"{o}f random sets in question are close to being non-random (in that they fail fairly simple statistical tests), and $K$-trivial c.e.\ sets are close to being computable.

The following theorem resolves one of the main open questions in algorithmic randomness, and further strengthens the relationship between the Martin-L\"of random sets and the $K$-trivial sets.
\begin{theorem} \label{thm_main}
There is an incomplete Martin-L\"{o}f random set  that computes every $K$-trivial set.
\end{theorem}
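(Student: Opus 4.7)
The plan is to construct $Y$ as a $\Delta^0_2$ Martin-L\"of random set, simultaneously building a single Turing functional $\Gamma$ so that $\Gamma(Y)$ computes every $K$-trivial, and arranging $Y \not\geq_{\textup{T}} \emptyset'$ by a priority diagonalisation against each candidate reduction $\Phi$. Since every $K$-trivial is computed by a c.e.\ $K$-trivial (a standard fact from cost-function theory), I would reduce to enumerating the c.e.\ $K$-trivials $(A_e)$ with approximations $(A_{e,s})$, obtained via a cost-function layering of the $\Sigma^0_3$ index set, and code each $A_e$ into a distinct column of $Y$. Whenever $A_{e,s}(n)$ flips at stage $s$, the construction modifies $Y_s$ at the $(e,n)$-coding position so that $\Gamma(Y_{s+1})$ continues to track $A_e$.

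The first real step is to preserve randomness of $Y$ under these forced changes. Here the defining property of $K$-triviality pays off: the symmetric differences $A_{e,s}\,\triangle\, A_{e,s+1}$ are summable against a cost function $c$ with $\sum_s c(s,s)<\infty$. Weighting by $2^{-e}$ and summing across $e$ yields a finite global change budget. Consequently, the cylinders $\Cyl{Y_s\uhr{m}}$ visited by the approximation form a c.e.\ family of finite total measure, essentially a single bounded Solovay test, and I would need $Y$ to lie in only finitely many of its components. I expect to build $Y$ as random with respect to a notion slightly stronger than Martin-L\"of randomness but compatible with a $\Delta^0_2$ approximation (a Demuth-style or ``Oberwolfach-type'' randomness), so that passing such a test forces $Y$ to be Martin-L\"of random a fortiori.

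Incompleteness is handled by the usual diagonalisation: for each $\Phi$ attach a requirement $\mathcal{N}_\Phi$ that seeks a witness $n$ and stage $s$ with $\Phi(Y_s,n)\downarrow \ne \emptyset'_s(n)$ and then freezes $Y$ below the $\Phi$-use. A later coding change could threaten the freeze, but the global change budget ensures that only finitely many coding events can occur below any fixed use, so infinitely many usable witnesses remain for $\mathcal{N}_\Phi$. The main obstacle will be orchestrating all of this coherently: the coding load grows with each additional $K$-trivial being served, the preservation demands of the $\mathcal{N}_\Phi$ compete with coding actions, and the Solovay-test budget must stay summable throughout. I expect this balance to force a delicate layered construction -- presumably an adaptation of golden-run or decanter-style bookkeeping -- married to a carefully tuned randomness notion that underwrites the test-based argument. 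Identifying the right randomness notion, and calibrating the cost function so that the coding, preservation, and measure constraints are simultaneously met, is the crux of the problem.
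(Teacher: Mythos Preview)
Your plan diverges entirely from the paper's, and the place you yourself flag as ``the crux'' is a genuine gap that the outline does not close.

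The paper does no direct coding of $K$-trivials into the random set. It assembles the theorem from two independently proved pieces: (i) any Martin-L\"of random set that fails to be a \emph{density-one point} computes every $K$-trivial (Theorem~\ref{thm_K-triv_and_density}, proved via Oberwolfach tests and additive cost functions), and (ii) Day and Miller force a $\Delta^0_2$ random $X$ that is a positive density point---hence Turing incomplete, by the Franklin--Ng and Bienvenu--H\"olzl--Miller--Nies equivalence---yet not a density-one point (Theorem~\ref{thm_Day_Miller_density}). The reductions from the $K$-trivials to $X$ are never built explicitly; they fall out of the density defect.

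The gap in your approach is the randomness of $Y$. Noting that the cylinders altered during the approximation have bounded total measure yields a single Solovay test, but passing one test is not Martin-L\"of randomness; your construction supplies no independent source of randomness whatsoever. If the columns of $Y$ literally carry the $A_e$, then $Y$ is essentially $\bigoplus_e A_e$, a c.e.\ set, and a c.e.\ random real is left-c.e.\ random and hence Turing complete---so you would lose incompleteness rather than gain it. Your suggested remedy, to make $Y$ satisfy a Demuth- or ``Oberwolfach-type'' randomness notion, collides head-on with Theorem~\ref{thm_OW_and_K_triv}: an Oberwolfach random set fails to compute the smart $K$-trivial, so such a $Y$ cannot compute all $K$-trivials. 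The set you need must be ML-random yet \emph{not} Oberwolfach random, and manufacturing such a set is exactly the content of the Day--Miller forcing; no direct priority-plus-column-coding construction is known to thread this needle, which is why the covering problem resisted such attacks for eight years.
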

This theorem is essentially a corollary of two recent results, both proved in 2012: the first by Bienvenu, Greenberg, Ku\v{c}era, Nies and Turetsky~\cite{Bienvenu.Greenberg.ea:preprint}; and the second by Day and Miller~\cite{Day.Miller:nd}. In the remainder of this announcement, we will explain the background to the problem behind this theorem,  and indicate the main ideas used in the proof.

In this announcement, by ``random'' we will henceforth mean Martin-L\"{o}f random. We will give the full definition shortly, but essentially, an element $X$ of Cantor space is Martin-L\"{o}f random if it is not an element of a particular kind of effectively presented,  null $G_\delta$ class.\footnote{Elements of Cantor space are called \emph{reals}, \emph{sequences} or simply \emph{sets} by computability theorists, the latter since they are identified with subsets of $\w$. Subsets of Cantor space are referred to as either \emph{sets} or \emph{classes}.}
Any random set computes a diagonally noncomputable function. This implies, by Arslanov's completeness criterion~\cite{Arslanov:1981}, that the only c.e.\ sets that can compute random sets are the complete ones. 
Two questions of interest are  thus: 

\begin{itemize} \item[(1)] classifying those random sets that can compute incomputable c.e.\ sets, and

\item[(2)] classifying  those c.e.\ sets that can be computed by random sets. \end{itemize}
In 1986, Ku\v{c}era~\cite{Kucera:86} showed that every $\Delta^0_2$ random set is Turing above  a noncomputable c.e.\ set, demonstrating that these questions may have interesting  answers. 


The first question was settled by Hirschfeldt and Miller (see \cite[Th.~5.3.16]{Nies:Book}). To explain their answer we need to provide some definitions. We denote Cantor space by $2^\w$. An effectively presented $G_\delta$ set is the  intersection $\bigcap_n \UU_n$ of a nested, computable sequence of effectively open ($\Sigma^0_1$) subsets of $2^\w$  (i.e., it is a $\Pi^0_2$ class). A $G_\delta$ set $E$ is \emph{null} if $\leb (E) =0$, where $\leb$ denotes the fair-coin measure on Cantor space, which is mapped to Lebesgue measure under the standard near-isomorphism between Cantor space and the unit interval $[0,1]$. Those elements of Cantor space that are not contained in  any null, effectively presented $G_\delta$ sets are, in the parlance of the field, known as \emph{weakly $2$-random}. 
A set $X \in 2^\w$ is \emph{Martin-L\"of random} if it is not a member of any effectively presented $G_\delta$ set $\bigcap_n \UU_n$ with $\leb(\UU_n)\le 2^{-n}$. This additional condition specifies that not only is  $\bigcap_n \UU_n$  null, but this fact is witnessed in a strong manner. 
Such a null set is known as a \emph{test} for Martin-L\"of randomness. Martin-L\"{o}f randomness can be defined by other means, for example using effective betting strategies or compressibility of initial segments.\footnote{For more on algorithmic randomness, the reader can consult the books \cite{Nies:Book,DowneyHirschfeldtBook}, which are the most up-to-date surveys of the field.}
  Hirschfeldt and Miller showed that a Martin-L\"of random set $X$ computes a noncomputable c.e.\ set if and only if it is not weakly $2$-random. This gives a pleasing characterization of those ``special'' random sets that compute noncomputable c.e.\ sets using the tools of effective measure theory (or very low level effective descriptive set theory). 

For the second question, we are faced with a simple example: \emph{all} c.e.\ sets are computable from some random set; indeed a single random set, Chaitin's ``halting probability'' $\Omega$, computes all c.e.\ sets. 
In fact, the Ku\v{c}era--G\'{a}cs theorem \cite{Kucera:85,Gacs:86} states that every set is computable from a random set. 
However, this is only a partial answer to the question. A theorem of Stephan's \cite{Stephan:06} establishes a dichotomy between two kinds of random sets. 
On the one hand, those randoms that compute $\emptyset'$, the \emph{complete} ones,  pass all the relevant statistical tests (the effective null classes) not because they are in some way typical, but because their strong information content allows them mimic typical sets.
On the other hand, \emph{incomplete} random sets, those that do not compute $\emptyset'$, are ``more random'' in that they lack significant computational power; Stephan showed that they cannot compute complete extensions of Peano arithmetic. 
In this announcement, we characterize the c.e.\ sets covered by, which means Turing computable from, incomplete random sets. We will see that the only c.e.\ sets that can be computed by incomplete random sets are very weak, i.e., close to being computable and very far from the halting problem. We begin by giving some background on the appropriate instance of computational weakness, $K$-triviality.

\smallskip

The $K$-trivial sets were introduced by Chaitin \cite{Chaitin_KTrivial} and first  studied by Solovay in an unpublished manuscript \cite{Solovay:manuscript}. One motivation for their definition was Chaitin's characterization \cite{Chaitin:InformationStrings} of the computable sets by the compressibility of their  initial segments. Namely, letting $C$ denote plain Kolmogorov complexity, the sets $A$ that satisfy $C(A\rest{n})\le^+ C(n)$ are the computable ones.\footnote{By $f(n) \le^+ g(n)$ we abbreviate $f(n) \le g(n) + O(1)$. We also write $f(n) \ge^+ g(n)$ with the same meaning.}  The condition means that every initial segment of $A$ contains no more information than its length, and so it is as compressible as it can be. 

The use of prefix-free Kolmogorov complexity $K$, rather than its plain variant $C$, is motivated by Schnorr's theorem (\cite{Schorr:ProcessComplexity}, see~\cite{Chaitin:ProgramSize}), which characterizes the random sets as those that have $K$-incompressible initial segments: a set $A$ is random if and only if $K(A\rest n)\ge^+ n$. The measure of an open set is determined by any prefix-free set of finite strings generating it, and so the study of prefix-free complexity, unlike the plain variety, is closely linked with measure-theoretic arguments. This explains why $K$ is the most useful Kolmogorov complexity when considering the randomness content of sets.
The $K$-trivial sets---those sets $A$ whose initial segments are as $K$-compressible as possible, in that $K(A\rest{n})\le^+ K(n)$---are the very opposite of random sets, and for that reason have sometimes been called ``anti-random''. Surprisingly, Solovay showed that in contrast with plain complexity, there are noncomputable $K$-trivial sets. 

Chaitin \cite{Chaitin_KTrivial} showed that all $K$-trivial sets are $\Delta^0_2$, that is, computable from the halting problem $\emptyset'$. In the 2000's, a series of results greatly improved our understanding of the $K$-trivial sets. These results showed that the $K$-trivial sets are all computationally very weak, that the class of $K$-trivial sets is robust, and that the notion is inherently computably enumerable. 

The computational weakness of the $K$-trivial sets was first indicated by Downey, Hirschfeldt, Nies and Stephan \cite{DHNS:Trivial}, who showed that $K$-trivial sets are incomplete;  that is, they do not compute $\emptyset'$. This result was then strengthened by Nies \cite{Nies:LownessRandomness}, who showed that in fact every $K$-trivial set is jump-traceable and superlow, roughly saying that in terms of the Turing jump operator, these sets are indistinguishable from the computable ones. Other ways to express their computational weakness in fact led \cite{Nies:LownessRandomness} to equivalent definitions of the $K$-trivial sets. The $K$-trivial sets coincide with the sets that are \emph{low for $K$}---sets that have no compression power as oracles, in that $K^A$, the prefix-free complexity relative to $A$, is equal, up to an additive constant, to the unrelativized $K$. Similarly, the $K$-trivial sets coincide with the sets that are \emph{low for ML-randomness}---sets $A$ that cannot detect any regular patterns in random sets, in that every random set is also random relative to~$A$. These equivalences witness the robustness of the class. This robustness is further reflected in the structure of the Turing degrees of $K$-trivial sets: these degrees form an ideal, and restricted to the c.e.\ sets, this ideal has a $\Sigma^0_3$ index set. Barmpalias and Nies showed that this ideal has a low$_2$ c.e.\ upper bound.  Ku\v{c}era and Slaman~\cite{Kucera.Slaman:09}  provided a low upper bound that, however, cannot be c.e.\ by a result in \cite{Muenster} (also see \cite[5.3.22]{Nies:Book}).

The relation between $K$-triviality and enumerability began with a construction, by Zambella \cite{Zambella:90}, of a noncomputable c.e.\ $K$-trivial set. Another such construction was given by Downey, Hirschfeldt, Nies and Stephan \cite{DHNS:Trivial}, which came to be known as the ``cost-function'' construction; this construction was inspired by an earlier  construction of a noncomputable c.e.\ set that is low for ML-randomness due to Ku{\v c}era and Terwijn~ \cite{Kucera.Terwijn:99}. Finally, Nies \cite{Nies:LownessRandomness} showed that $K$-triviality is an inherently enumerable notion:  every $K$-trivial set is computable from a c.e.\ $K$-trivial set (so in the Turing degrees, the ideal of $K$-trivial degrees is generated by its c.e.\ elements). He also showed that the cost-function construction is universal, in that every $K$-trivial set can be obtained as the result of an approximation that ``obeys'' the standard cost function.

These results have had several applications. Following the example of Ku\v{cera} \cite{Kucera:86}, in \cite{DHNS:Trivial} it was shown how c.e.\ $K$-trivial sets provide an injury-free solution to Post's problem. A more surprising application was given by Ku\v{c}era and Slaman \cite{KuceraSlaman:Scott}, who used $K$-trivial sets in their proof that no Scott set of Turing degrees is ``hourglass-like''. That is, for every noncomputable real in the Scott set there is a Turing incomparable real.

\smallskip

We say that a set $A$ is  a \emph{base for randomness} if $A$  is computable from some $A$-random set.  This notion was first studied in~\cite{Kucera:93}.  A noncomputable  base for randomness is so computationally weak that the cone above it, while being null, is not an effective $A$-null class.   A first indication of the relationship between $K$-triviality and incomplete random sets came from   work of Hirschfeldt, Nies and Stephan~\cite{HirschfeldtNiesStephan:UsingRandomOracles}. They showed that if $A$ is a c.e.\ set that is computable from an incomplete random set $X$, then $X$ is in fact random relative to $A$, and so $A$ is a base for randomness.   In turn, they showed that the sets that are a base for randomness coincide with  the $K$-trivial sets. Thus, every c.e.\ set that is computable from an incomplete random set  is $K$-trivial. In light of this result, in 2004, Stephan 
asked whether the c.e.\ sets that are computable from incomplete random sets are precisely the $K$-trivial sets. This problem, known as the random covering problem, became one of the major open questions in the field of algorithmic randomness. It is problem 4.6 in Miller and Nies's survey  \cite{MillerNies:Open} of open questions in the field. In light of the enumerability of $K$-trivial sets, this is equivalent to asking whether every $K$-trivial set is computable from an incomplete random set.
As already mentioned, Theorem~\ref{thm_main}  answers the question in the affirmative. This gives yet another characterization of $K$-triviality, one which uses only very basic ingredients from computability theory---namely computable enumerability, Turing reducibility, and the halting problem---together with ML-randomness (but with no reference to relativization). 

\smallskip

In retrospect, the first step toward solving the covering problem was made by Franklin and Ng who gave a  characterization of the incomplete random sets \cite{FranklinNg:Difference}. This is analogous to the Hirschfeldt--Miller result in that it gives a measure-theoretic characterization of a class of random sets defined by their interaction within the Turing degrees.  They formulated a notion of randomness---\emph{difference randomness}---and showed that it is equivalent to being random and Turing incomplete. In more detail, they showed the equivalence, for a set $Z$, of the following properties:
\begin{enumerate}
	\item $Z$ is random and incomplete.
	\item $Z$ avoids all null sets of the form $\PP \cap \bigcap_n \UU_n$, where the  $\UU_n$ are uniformly effectively open, $\PP$ is effectively closed, and $\leb(\PP\cap \UU_n) \le 2^{-n}$. 
\end{enumerate}
Franklin and Ng also showed that difference randomness could be characterized using a concept similar  to a test for Martin-L\"of randomness. In his investigations of differentiability of constructive functions on the reals, Demuth \cite{Demuth_classes_of_arithmetical} introduced notions of randomness stronger than Martin-L\"{o}f's. Like ML-randomness, his tests are null sets that are the intersection of a sequence $\seq{\UU_n}$ of effectively open sets with $\leb(\UU_n)\le 2^{-n}$. However, when defining the sets $\UU_n$ we are allowed to change our mind sometimes about what $\UU_n$ is. In Martin-L\"of tests, the function taking $n$ to an index for $\UU_n$ is computable; Demuth allowed effectively approximable functions, with a computable bound on the number of mind-changes; this notion of randomness is now known as \emph{weak Demuth} randomness (see \cite{Kucera.Nies:12} for background on Demuth's work in randomness). Franklin and Ng showed that a particular class of weak Demuth tests also captured difference randomness; in their tests, the different ``versions'' for each component $\UU_n$ have to be disjoint. 

\smallskip

The second step was made in 2011 by Bienvenu, H\"{o}lzl, Miller and Nies \cite{Bienvenu.Hoelzl.ea:12}, who gave an analytic characterization of incomplete randomness. Recall that the Lebesgue density theorem says that if $B$ is a measurable set, then for almost all $x\in B$, the limit of the conditional probability (or measure) 
\[ \leb(B|I) = \frac{\leb(B\cap I)}{\leb(I)}\]
for intervals $I$ that contain $x$ of shrinking diameter, is 1. When working in Cantor space, rather that using arbitrary intervals, it is more natural to use dyadic intervals.  The analog of Lebesgue's theorem in this context says that for any measurable set $B\subseteq 2^\w$, for almost all $X\in B$, the \emph{lower dyadic density of $B$ at $X$}, 
\[ \rho(B|X) = \liminf_{n\to \infty} \leb(B | [X\rest n]),\]
is 1; here $[\s]$ denotes the basic clopen subset of $2^\w$ determined by the finite binary string $\s$. 

Computability theorists often try to find   effective content in  results of classical mathematics. In analysis, the effective versions of almost-everywhere theorems often translate to characterizations of notions of randomness in analytic terms. Bienvenu, H\"{o}lzl, Miller and Nies~\cite{Bienvenu.Hoelzl.ea:12} applied this to a restricted form of Lebesgue's theorem and showed that the following are equivalent for a random set $X\in 2^\w$:%
\footnote{They actually showed the result for Lebesgue density in the context of the unit interval, and then derived the weaker dyadic variant (for detail see the journal version \cite[Remark 3.4]{Bienvenu.Hoelzl.ea:12a}); we restrict our attention to dyadic density.}
\begin{enumerate}
	\item $X$ is difference random; 
	\item\label{it:pd} $\rho(\PP|X)>0$ for all effectively closed sets $\PP$ containing $X$.
\end{enumerate}
If $X$ has property \eqref{it:pd}, we call it a \emph{positive density point}. Together with Franklin and Ng's work, we see that a random set $X$ is incomplete if and only if it is a positive density point. The first indication that the analytic notion of density is relevant to understanding the interaction of random and $K$-trivial sets was given by Day and Miller \cite{Day.Miller:12}. They used density and the results from \cite{Bienvenu.Hoelzl.ea:12} to solve a problem related to the covering problem, known as the ML-cupping problem; in particular, they showed that a set $A$ is $K$-trivial if and only if there is no incomplete random set $Z$ that joins $A$ above $\emptyset'$. 

\smallskip

The work in \cite{Bienvenu.Hoelzl.ea:12} left open the problem of characterizing those random sets $X$ for which the full effective version of Lebesgue's density theorem holds. We say that $X\in 2^\omega$ is a \emph{density-one point} if $\rho(\PP|X)=1$ for all effectively closed sets $\PP$ containing $X$. The question that remained was whether every random positive density point is a density-one point. Put differently, if $X$ is random and not a density-one point, must it compute $\emptyset'$? It was known by July 2011 (see Bienvenu, H\"{o}lzl, Miller and Nies~\cite{Bienvenu.Hoelzl.ea:12a}) that any such set is \emph{$LR$-hard}: every $X$-random set is actually $\emptyset'$-random. While this proves that $X$ has much of the computational strength of $\emptyset'$, it was also known (\cite[6.3.10]{Nies:Book}, also Ku\v{c}era (unpublished)) that some incomplete random sets have this highness property.

\smallskip

During and after a research-in-pairs stay at the mathematical research institute in Oberwolfach in February 2012,  Bienvenu, Greenberg, Ku\v{c}era, Nies and Turetsky found the analogue of the Hirschfeldt--Miller and Franklin--Ng characterizations for computing $K$-trivial sets. They defined a notion of randomness, called \emph{Oberwolfach randomness}. They were motivated by work by Figueira, Hirschfeldt, Miller, Ng and Nies \cite{Figueira.Hirschfeldt.ea:12}, who investigated the randomness strength of a $\Delta^0_2$ random set $Z$ by counting the number of changes required in any computable approximation of $Z$. This was linked with Demuth's idea, mentioned earlier, of accepting changes in components of tests. Oberwolfach randomness is a weak form of weak Demuth randomness, in which the changes of the components have to be coherent between the levels (the changes of $\UU_{n+1}$ are limited by the changes of $\UU_n$). In fact, examining the argument given by Franklin and Ng shows that a ``version-disjoint'' variant of Oberwolfach randomness suffices in order to capture difference randomness. Thus, Oberwolfach randomness lies between weak Demuth and difference randomness. And indeed, this notion of randomness characterized the candidates for the solution of the covering problem. 

\begin{theorem}[Bienvenu, Greenberg, Ku\v{c}era, Nies and Turetsky~\cite{Bienvenu.Greenberg.ea:preprint}]\label{thm_OW_and_K_triv} \
	\begin{enumerate}
		\item 	Suppose a set  $X$ is random. Then $X$  is not Oberwolfach random if and only if $X$  computes all $K$-trivial sets.
		\item  There is a  $K$-trivial set that is  ``smart'' in that it is not computable from any Oberwolfach random set. 
	\end{enumerate}
\end{theorem}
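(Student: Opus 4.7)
The plan is to dispatch the backward direction of (1) immediately from (2): any random set that computes all $K$-trivials must in particular compute the smart $K$-trivial produced in (2), and so cannot be Oberwolfach random. The substantive content therefore lies in the forward direction of (1) and in the construction of the smart set for (2).

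For the forward direction of (1), I would invoke the cost-function machinery. By Nies's theorems recalled above, every $K$-trivial is computable from a c.e.\ $K$-trivial, and every c.e.\ $K$-trivial admits an enumeration that \emph{obeys} the standard cost function $c(x,s) = \sum_{x < y \le s} 2^{-K_s(y)}$. Thus it suffices, given an Oberwolfach test $\seq{\UU_n}$ capturing $X$, to build a Turing reduction $\Gamma^X = A$ for any such $A$. I would dress each $\UU_n$ with finitely many clopen ``guess boxes'' labelled with candidate values for $A \rest{n}$; Martin-L\"of randomness of $X$ forces $X$ to sit inside a box of measure bounded below by the standard randomness deficiency, so that $\Gamma^X(n)$ reads off the current label. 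Each time $A_s \rest{n}$ changes, the offending box is discarded and its measure is counted as a payment. The $c$-obedience of the enumeration caps the total payment, while the defining feature of an Oberwolfach test---that mind-changes at level $n+1$ are coherent with those at level $n$---lets these payments be scheduled inside the decreasing budgets $\leb(\UU_n) \le 2^{-n}$, ensuring $\Gamma^X = A$.

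For part (2), I would construct the smart $K$-trivial $A$ by a cost-function construction with respect to a modification $\tilde c$ of the standard cost function: the obedience will deliver $K$-triviality, while suitable extra costs encode diagonalization against each Turing functional $\Gamma_e$. In parallel with $A$ I would build, for each $e$, an Oberwolfach test $\seq{\VV^e_n}$ so that at any stage $s$ at which a cylinder $[\sigma]$ satisfies $\Gamma_e^{\sigma}\rest{n} = A_s \rest{n}$ with use short enough to fit under the level-$n$ budget, the cylinder is enumerated into the current version of $\VV^e_n$ and a chosen small $x$ is enumerated into $A$ to force $\Gamma_e^{\sigma}$ off its current agreement, with the associated $\tilde c$-cost paid. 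Diagonalization at level $n+1$ is triggered only after the level-$n$ activity stabilizes, so the cross-level coherence required of an Oberwolfach test is preserved. If an Oberwolfach random $X$ satisfied $\Gamma_e^X = A$, its initial segments would be trapped in all $\VV^e_n$, contradicting Oberwolfach randomness.

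The main obstacle will be the joint control demanded in (2): cost-function bookkeeping for $K$-triviality of $A$ must be tightly synchronized with the measure budgets and cross-level coherence of every test $\seq{\VV^e_n}$, and the two sets of constraints pull against each other. This tension reflects the delicate position of Oberwolfach randomness between weak Demuth randomness and difference randomness, and overcoming it will require carefully apportioning weights among the requirements, using the universality of cost-function approximations to order the $A$-changes so that they respect the hierarchical mind-change structure demanded by all $\seq{\VV^e_n}$ simultaneously.
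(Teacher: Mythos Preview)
Your overall architecture matches the paper's: the backward direction of (1) follows from (2), and the forward direction of (1) is the Hirschfeldt--Miller ``guess box'' construction driven by a cost-function bound on the total measure wasted by $A$-changes.  But there is a genuine gap in the forward direction as you have written it.

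You invoke obedience of $A$ to the \emph{standard} cost function $c(x,s)=\sum_{x<y\le s}2^{-K_s(y)}$ and then assert that ``the defining feature of an Oberwolfach test \dots\ lets these payments be scheduled inside the decreasing budgets $\leb(\UU_n)\le 2^{-n}$''.  This is exactly the step that does not come for free.  The cost that the Hirschfeldt--Miller argument needs bounded is the measure of discarded boxes \emph{inside the components of the given Oberwolfach test}, and obedience to the standard cost function says nothing directly about that quantity.  The paper's route (following~\cite{Bienvenu.Greenberg.ea:preprint}) is to observe that an Oberwolfach test, by virtue of the cross-level coherence of its version changes, gives rise to a $\Pi^0_2$ null class whose associated Hirschfeldt--Miller cost function is \emph{additive} in the sense of~\cite{Nies:ICM}; and then to appeal to the separate theorem that every $K$-trivial obeys \emph{every} additive cost function, not just the standard one.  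That second fact is the missing lemma in your sketch: without it, the link between ``$A$ obeys $c$'' and ``the wasted measure in $\seq{\UU_n}$ is finite'' is not established.  (Your reduction to the c.e.\ case via Nies's theorem is also unnecessary once one has obedience to all additive cost functions, which holds for arbitrary $K$-trivials.)

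Your plan for (2) is in the right spirit---one does build the smart $K$-trivial by a cost-function construction while simultaneously assembling an Oberwolfach test for each functional---but the same point applies in reverse: the diagonalization must be organised so that the cost function one is forced to obey is additive, and the coherence of the tests being built is what delivers that additivity.  The ``tension'' you anticipate is resolved precisely by this additive-cost-function framework, rather than by ad hoc weight apportionment against the standard cost function.
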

The ``smart'' $K$-trivial showed that if the covering problem has a positive solution, then it has a strong positive solution in that some incomplete random set would compute \emph{all} $K$-trivial sets. 

For the solution of the random covering problem, one needs    the implication from left to right in (1). The authors show that $X$ lies in a $\Pi^0_2$ null class   such that the cost function    derived from the  Hirschfeldt--Miller argument (see \cite[Th.~5.3.16]{Nies:Book}) is additive in the sense of \cite{Nies:ICM}. They then use the fact  that every $K$-trivial  $A$ obeys every additive cost function (\cite{Nies:ICM}; also see  \cite{Bienvenu.Greenberg.ea:preprint}) in order to conclude that  $A \le_T X$. 

The authors of \cite{Bienvenu.Greenberg.ea:preprint} used the technique from \cite{Bienvenu.Hoelzl.ea:12a} to show that every random that is not Oberwolfach random is $LR$-hard, and therefore high. Thus, the construction of the ``smart'' $K$-trivial set showed that no low random set can compute all $K$-trivial sets (in contrast with the existence of a low PA degree above all $K$-trivial sets). The question whether such a random set exists was a strong variant of the covering problem, which was  also posed by Stephan in 2004. Using work from \cite{Figueira.Hirschfeldt.ea:12}, they also concluded that the smart $K$-trivial set is not computable from both halves of a random set, negatively solving another strong variant of the covering problem (Problem 4.7 in~\cite{MillerNies:Open}). 

Further, the authors of \cite{Bienvenu.Greenberg.ea:preprint} made a connection between Oberwolfach randomness and Lebesgue's density theorem, by showing that if $X$ is Oberwolfach random then it is a density-one point. This was a corollary of showing that if $X$ is Oberwolfach random, then every interval-c.e.\footnotemark\ function  is differentiable at $X$ (identifying $X$ with the real that has binary expansion $0.X$).
\footnotetext{A non-decreasing lower semicontinuous function $f\colon[0,1]\to\mathbb{R}$ is \emph{interval-c.e.}\ if $f(0)=0$ and $f(y)-f(x)$ is a left-c.e.\ real, uniformly in rational numbers $x, y$. Equivalently, by work of Freer, Kjos-Hansen, Nies and Stephan~\cite{Freer.Kjos.ea:nd}, an interval c.e.\ function is the variation function of a computable real-valued function.}
As a by-product, they obtained:

\begin{theorem}[\cite{Bienvenu.Greenberg.ea:preprint}]	\label{thm_K-triv_and_density}
	If $X$ is a random set that is not a density-one point, then $X$ computes all $K$-trivial sets. 
\end{theorem}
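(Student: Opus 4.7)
The plan is to derive this statement as a direct corollary of Theorem~\ref{thm_OW_and_K_triv}(1) together with the intermediate result, highlighted in the paragraph immediately preceding the statement, that every Oberwolfach random set is a density-one point. In particular, no new recursion-theoretic construction is needed; the argument is a pure combination of implications already recorded in the excerpt.

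Concretely, I would proceed as follows. Assume $X$ is random and is not a density-one point, so that there is an effectively closed set $\PP \ni X$ with $\rho(\PP|X) < 1$. Taking the contrapositive of ``Oberwolfach random $\Rightarrow$ density-one'', I conclude that $X$ is not Oberwolfach random. At this point the left-to-right direction of Theorem~\ref{thm_OW_and_K_triv}(1) applies and yields that $X$ computes every $K$-trivial set, which is the desired conclusion.

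Thus the whole proof amounts to plugging together two nontrivial facts from \cite{Bienvenu.Greenberg.ea:preprint}. The first, that Oberwolfach randomness implies the density-one property, is itself a corollary of the stronger statement that every interval-c.e.\ function $f$ is differentiable at any Oberwolfach random $X$: one specialises this to the interval-c.e.\ function that records the mass of the effectively open set $2^\w \setminus \PP$ on initial intervals of the form $[0,x)$, and extracts $\rho(\PP|X)=1$ from the vanishing of the derivative of this function at $X$ when $X\in \PP$. The second, Theorem~\ref{thm_OW_and_K_triv}(1), is established by producing a $\Pi^0_2$ null class witnessing that $X$ is not Oberwolfach random whose associated Hirschfeldt--Miller cost function is additive, and then invoking the fact that every $K$-trivial $A$ obeys every additive cost function to conclude $A \le_\Tur X$.

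The genuine technical obstacles therefore live entirely inside the proofs of these two inputs; for the corollary stated here, the content reduces to the logical chain ``not density-one $\Rightarrow$ not Oberwolfach random $\Rightarrow$ computes all $K$-trivials'', and there is no additional obstacle to overcome.
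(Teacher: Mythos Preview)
Your proposal is correct and matches the paper's own derivation: the paper explicitly presents Theorem~\ref{thm_K-triv_and_density} as a by-product of the two inputs you name, namely that Oberwolfach randomness implies the density-one property (itself via differentiability of interval-c.e.\ functions) together with the left-to-right direction of Theorem~\ref{thm_OW_and_K_triv}(1). The paper also notes an alternative route via the $LR$-hardness argument of~\cite{Bienvenu.Hoelzl.ea:12a}, but that route relies on lowness for ML-randomness and is not the one intended here.
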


An alternative proof of Theorem~\ref{thm_K-triv_and_density} is given in \cite{Bienvenu.Hoelzl.ea:12a}, using the same technique as the proof that such an $X$ is $LR$-hard. However, the alternative proof requires the fact that $K$-trivial sets are low for ML-randomness, hence it is not useful in the proof of Theorem~\ref{thm:TuringandlowforK} below.

\smallskip

Theorem~\ref{thm_K-triv_and_density} set the stage for the last ingredient in the solution of the covering problem, which was provided in August 2012 by Day and Miller~\cite{Day.Miller:nd}. Devising a notion of forcing using a collection of effectively closed sets especially defined to control density, they showed:

\begin{theorem}[\cite{Day.Miller:nd}] \label{thm_Day_Miller_density}
	There is a random set $X$ (in fact, a $\Delta^0_2$ one) that is a positive density point but not a density-one point.
\end{theorem}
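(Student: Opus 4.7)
The plan is to construct $X$ as a $\Delta^0_2$ generic for a forcing whose conditions are $\Pi^0_1$ classes equipped with auxiliary data tailored to control lower dyadic density. By the Franklin--Ng characterization of incomplete Martin-L\"of randoms, combined with the Bienvenu--H\"olzl--Miller--Nies equivalence between incomplete random sets and positive density points, a Martin-L\"of random $X$ is a positive density point if and only if it is Turing incomplete. It therefore suffices to construct a $\Delta^0_2$ Martin-L\"of random $X$ that does not compute $\emptyset'$ and is not a density-one point. Failure of density one will be witnessed by a single effectively closed set $\PP = 2^\w \setminus U$, where $U$ is a c.e.\ open set built in parallel with $X$; the goal is to arrange $X \in \PP$ together with $\leb(U \mid [X\rest n]) \ge 1/2$ for infinitely many $n$, so that $\rho(\PP \mid X) \le 1/2 < 1$.

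A condition is a triple $(\sigma, Q, V)$ where $\sigma \in 2^{<\w}$, $V$ is a finite union of basic open sets currently enumerated into $U$, and $Q \subseteq [\sigma] \setminus V$ is a $\Pi^0_1$ class with $\leb(Q) \ge \eta \cdot 2^{-|\sigma|}$ for a parameter $\eta > 0$ fixed at the start. Extensions may lengthen $\sigma$, shrink $Q$, and enlarge $V$, provided the measure bound is preserved. Three kinds of actions produce extensions. For randomness $(R_e)$: intersect $Q$ with the complement of the $e$-th level of a universal Martin-L\"of test, choosing $e$ large enough that $2^{-e}$ is negligible compared with $\leb(Q)$. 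For incompleteness $(I_e)$: use $\emptyset'$ to search for a pair $(n, C)$ where $C$ is clopen, $\leb(C \cap Q) > 0$, and $\Phi_e^Y(n)$ converges to a value different from $\emptyset'(n)$ for every $Y \in C$, then restrict $Q$ to $Q \cap C$; existence of such a witness follows from Sacks's theorem that $\{Y : \emptyset' \le_T Y\}$ has measure zero, since otherwise $\Phi_e^Y = \emptyset'$ on a full-measure subset of $Q$. For density deficit $(D_k)$: split $[\sigma]$ into its children, pick the child $[\sigma i]$ with $\leb(Q \cap [\sigma i]) \ge \leb(Q)/2$, enumerate the sibling cylinder $[\sigma(1-i)]$ into $V$, and replace $\sigma$ by $\sigma i$. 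Performing $(D_k)$ cofinally along the construction yields infinitely many $\tau \prec X$ with $\leb(U \mid [\tau]) \ge 1/2$.

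Since all decisions are made with $\emptyset'$, the resulting $X$ is $\Delta^0_2$. The main obstacle is the bookkeeping that keeps the normalized measure $\eta$ bounded below by a positive constant throughout the construction. The $(D_k)$-actions preserve $\eta$ exactly, since both numerator and denominator halve; and the $(R_e)$-actions can be made to cost an arbitrarily small fraction of $\leb(Q)$ by handling $R_e$ only once $|\sigma|$ is much smaller than $e$. The $(I_e)$-actions are the subtlest: the bare existence argument only yields a clopen witness $C$ of positive measure, which a priori could be tiny, and the alternative case of forcing divergence produces a $\Pi^0_1$ rather than clopen restriction. The fix uses that the exceptional set $\{Y \in Q : \Phi_e^Y = \emptyset'\}$ has measure zero in $Q$ by the same Sacks-theorem ingredient, so inner regularity allows us to choose $C$ (or a $\Pi^0_1$ divergence-forcing subclass) that retains all but an arbitrarily prescribed small fraction of $\leb(Q)$. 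Interleaving the three action types so that the combined losses in $\eta$ are summable produces a $\Delta^0_2$ Martin-L\"of random $X$ that is Turing incomplete and not a density-one point.
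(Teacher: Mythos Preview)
The paper does not give a proof of this theorem; it is cited from \cite{Day.Miller:nd} with only the one-line description that Day and Miller devise ``a notion of forcing using a collection of effectively closed sets especially defined to control density.'' So there is little to compare against in detail, but your proposal contains a genuine gap.

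The problem is your witness $\PP = 2^\w \setminus U$. You assert that $U$ is a c.e.\ open set, but in your construction it is not. The $(D_k)$-action enumerates the sibling $[\sigma(1-i)]$ into $U$, and the values of $\sigma$ and $i$ at that moment depend on the current $Q$, which in turn was shaped by earlier $(I_e)$-actions. Those actions, as you say explicitly, ``use $\emptyset'$ to search'' for the clopen witness $C$ (or for the $\Pi^0_1$ divergence subclass). Hence the sequence of cylinders enumerated into $U$ is only $\emptyset'$-computable, so $U$ is $\Sigma^0_1(\emptyset')$ and $\PP$ is merely $\Pi^0_1(\emptyset')$, i.e., $\Pi^0_2$. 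The definition of ``density-one point'' quantifies over \emph{effectively} closed classes, so a $\Pi^0_2$ class cannot serve as the witness. In effect, your construction makes the density deficit visible only to the very oracle $\emptyset'$ that you are trying to keep out of reach of $X$.

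This is not a bookkeeping slip; it is the heart of the difficulty. The Day--Miller forcing avoids it by building the density control into the $\Pi^0_1$ conditions themselves rather than into an auxiliary set assembled along the $\emptyset'$-generic sequence: the density deficit is then witnessed by a \emph{fixed} $\Pi^0_1$ class chosen at the outset. The real work is showing that one can extend conditions so as to keep the relative measure bounded below (for positive density and hence incompleteness) while infinitely often dropping the relative measure of that fixed ambient class below $1-\epsilon$ (for failure of density one). Your $(D_k)$-step sidesteps exactly this tension by manufacturing the deficit in a class that moves with the construction, which is why it does not deliver a $\Pi^0_1$ witness.
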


The Franklin--Ng, Bienvenu--H\"{o}lzl--Miller--Nies, Bienvenu--Greenberg--Ku\v{c}era--Nies--Turetsky and Day--Miller results now all combine to settle the covering problem in the affirmative:

\begin{corollary}\label{cor:covering}
	There is an incomplete, $\Delta^0_2$ random set that computes all $K$-trivial sets. 
\end{corollary}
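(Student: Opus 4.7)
The plan is to combine the four preceding results in sequence; essentially no new argument is required. First, I would apply Theorem~\ref{thm_Day_Miller_density} of Day and Miller to fix a $\Delta^0_2$ Martin-L\"of random set $X$ that is a positive density point but is \emph{not} a density-one point. I claim this single $X$ witnesses the corollary.

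To verify that $X$ is Turing-incomplete, I would invoke two characterizations in turn. Because $X$ is a random positive density point, the theorem of Bienvenu, H\"olzl, Miller and Nies~\cite{Bienvenu.Hoelzl.ea:12} (the equivalence of difference randomness with being a positive density point among the random sets) gives that $X$ is difference random. Franklin and Ng's characterization~\cite{FranklinNg:Difference} of difference randomness then yields that $X$ does not compute $\emptyset'$, i.e., $X$ is incomplete.

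To verify that $X$ computes every $K$-trivial set, I would apply Theorem~\ref{thm_K-triv_and_density} directly: $X$ is a random set that fails to be a density-one point, and the conclusion of that theorem is precisely that such an $X$ computes all $K$-trivial sets. Together with the preceding paragraph, $X$ is then an incomplete, $\Delta^0_2$ random set that covers the $K$-trivial sets, as required.

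The main obstacle is not this assembly, but the results being assembled. The genuinely new content has been placed in Theorem~\ref{thm_Day_Miller_density}, where Day and Miller must use their tailored notion of forcing with effectively closed sets to produce a $\Delta^0_2$ random that is a positive density point yet not a density-one point---this is where the analytic gap between the two notions of density among the random sets is opened up. The other hard input is Theorem~\ref{thm_K-triv_and_density}, whose proof (via the additive-cost-function machinery of Bienvenu, Greenberg, Ku\v cera, Nies and Turetsky) converts the analytic failure of density-one into a genuine Turing reduction computing every $K$-trivial set. Once both ingredients are in place, Franklin--Ng and Bienvenu--H\"olzl--Miller--Nies stitch them together mechanically to give the corollary.
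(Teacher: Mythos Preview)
Your proposal is correct and matches the paper's approach exactly: the corollary is obtained precisely by combining the Day--Miller, Bienvenu--H\"olzl--Miller--Nies, Franklin--Ng, and Bienvenu--Greenberg--Ku\v{c}era--Nies--Turetsky results in the way you describe. The paper gives no further argument beyond naming these four ingredients.
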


Further, Day and Miller incorporated non-$K$-trivial upper-cone avoidance with their forcing and constructed an exact pair of random sets for the ideal of $K$-trivial degrees. 

\smallskip

Apart from the inherent interest in the covering problem, and the unexpected path to its solution using analytic concepts, these results give alternative, modular proofs of some of the results concerning $K$-trivial sets. Hitherto, the fact that $K$-trivial sets are low for $K$ (or low for ML-randomness), and the fact that the $K$-trivial sets are downward closed under Turing reducibility, were proved using the \emph{decanter} argument and it stronger version, the \emph{golden run} technique. Researchers have found these highly combinatorial and complex techniques somewhat daunting. We can now provide alternative arguments. We note that (2) below  easily implies (1).
\begin{theorem}[Nies \cite{Nies:LownessRandomness}]\label{thm:TuringandlowforK} \

	\begin{enumerate}
		\item  	Let  $A$ be   $K$-trivial. Then  every set $B\le_{\Tur} A$ is $K$-trivial.
		\item  	Let  $A$ be   $K$-trivial. Then $A$ is low for $K$ (hence low for ML-randomness). 
	\end{enumerate}
\end{theorem}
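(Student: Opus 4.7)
Note first that (2) trivially implies (1): if $A$ is low for $K$ and $B \le_T A$, then
\[
K(B\rest n) \le^+ K^A(B\rest n) \le^+ K^A(n) \le^+ K(n),
\]
the first inequality by lowness of $A$ for $K$, the second because $B\rest n$ is uniformly computable from $A$ and $n$, and the third because $K^A \le^+ K$ is automatic. So $B$ is $K$-trivial. The remainder of the plan concerns (2).

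To prove (2), I first reduce to the case that $A$ is c.e. The property ``low for $K$'' is itself downward closed under Turing reducibility: if $B \le_T A$ then $K^A \le^+ K^B$, so lowness of $A$ for $K$ passes to $B$ via $K \le^+ K^A \le^+ K^B$. Combined with Nies's result that every $K$-trivial set is computable from a c.e.\ $K$-trivial, the general case of (2) follows once it is established for c.e.\ sets.

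Now suppose $A$ is a c.e.\ $K$-trivial set. By Corollary~\ref{cor:covering} there is an incomplete Martin-L\"of random set $X$ that computes every $K$-trivial set, so in particular $A \le_T X$. The Hirschfeldt--Nies--Stephan theorem, applied in its c.e.\ case, then upgrades $X$ to being $A$-random: so $A$ is a base for randomness, witnessed by $X$. At this point the three modular ingredients of the announcement (covering, difference randomness, base-for-randomness) have been assembled; what remains is the passage from a single witness to full lowness.

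The remaining step, which I expect to be the main obstacle, is to upgrade the existence of a single ML-random-and-$A$-random witness $X$ above $A$ to the conclusion that every Martin-L\"of random set is $A$-random, i.e., that $A$ is low for ML-randomness (equivalently, low for $K$, by Nies's equivalence). The approach I would pursue is to relativize the Day--Miller forcing underlying Theorem~\ref{thm_Day_Miller_density} and Corollary~\ref{cor:covering}: given an arbitrary ML-random $Y$, run the density-based forcing in the oracle $Y$ to produce an incomplete random $X$ with $A \le_T X$ and with $X$ random relative to $Y$. Van Lambalgen's theorem then yields $Y$ random relative to $X$, and since $A \le_T X$, $Y$ is $A$-random. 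The technical work is in verifying that the density-based forcing and the additive cost-function argument of~\cite{Bienvenu.Greenberg.ea:preprint} relativize uniformly in $Y$, and that the relativized Hirschfeldt--Nies--Stephan step still delivers $X$ being $A$-random; this is exactly where I anticipate the genuine labour of the proof to lie.
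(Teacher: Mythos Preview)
Your path up to ``$A$ is a base for randomness'' coincides with the paper's: reduce to the c.e.\ case, invoke the covering result to obtain an incomplete random $Z\ge_\Tur A$, and apply Hirschfeldt--Nies--Stephan to see that $Z$ is $A$-random. Two caveats on circularity, which you do not address: the paper uses Bienvenu's direct construction of a c.e.\ $K$-trivial above $A$ rather than Nies's original result, precisely because the latter goes through the golden-run machinery this argument is meant to bypass; and one must take the cost-function route to Theorem~\ref{thm_K-triv_and_density}, since the alternative proof already assumes lowness for ML-randomness.

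The genuine gap is your final step. You do not need to relativize the Day--Miller forcing to an arbitrary random $Y$, and it is not clear that plan even works: relativized covering would place $X$ above all $Y$-$K$-trivial sets, but $A$ need not be $K$-trivial relative to $Y$ (from $K(A\rest n)\le^+ K(n)$ you only get $K^Y(A\rest n)\le^+ K(n)$, not $\le^+ K^Y(n)$), so there is no obvious reason $A\le_\Tur X$. What the paper uses instead is that a single witness already suffices. A strengthened ``hungry sets'' argument (\cite[Section~5.1]{Nies:Book}) shows directly that any base for randomness is low for $K$: given $A\le_\Tur Z$ with $Z$ $A$-random, one simulates an arbitrary $A$-oracle prefix-free machine by an unrelativized KC set, charging the weight of abandoned guesses against the $A$-randomness deficiency of $Z$. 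So the labour you anticipate is absent; the missing ingredient is just this known implication ``base for randomness $\Rightarrow$ low for $K$''. Incidentally, the paper proves (1) first and independently, applying the original hungry-sets argument to $B$ to get $B$ $K$-trivial, rather than deducing (1) from (2) as you do.
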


\begin{proof}
	(1) A direct construction of Bienvenu's (see a  forthcoming journal paper with Downey, Merkle and Nies related to \cite{Bienvenu.Downey:09})  shows that $A$ is computable from some c.e., $K$-trivial set $C$. In \cite{Bienvenu.Greenberg.ea:preprint} it is proved  directly that every c.e., $K$-trivial set is computable from every random set that is not Oberwolfach random. Then, by the  argument above, $C$ is computable from some incomplete random set~$Z$. Now the Hirschfeldt--Nies--Stephan argument first shows that $Z$ is $C$-random, and so certainly $A$-random and $B$-random. Thus, $A$ and a fortiori  $B$,  is a  base for randomness. The ``hungry sets'' argument from \cite{HirschfeldtNiesStephan:UsingRandomOracles} now shows that $B$ is $K$-trivial. 

\noindent (2) The set  $A$ is low for $K$ by a slightly more elaborate ``hungry sets'' argument due to  \cite[Section 5.1]{Nies:Book}.
\end{proof}

\emph{Diamond classes} were  investigated   in   \cite{Nies:Book}, \cite{GreenbergNies:Benign} and elsewhere. These are collections of c.e.\ sets of the form 
\[ \CC^\diamond = \left\{ A \text{ c.e.}  \,:\,  A\le_\Tur X \text{ for all random }X\in \CC\right\},\]
where $\CC$ is any collection of sets; they are naturally ideals in the c.e.\ degrees. If $\CC$ is not null, then $\CC^\diamond$ consists of the computable sets; the Hirschfeldt--Miller argument shows that if $\CC$ is a null $\Sigma^0_3$ class then $\CC^\diamond$ contains a noncomputable set. Greenberg and Nies \cite{GreenbergNies:Benign} showed, for example, that the strongly jump-traceable c.e.\ sets form a diamond class. Results in \cite{Bienvenu.Greenberg.ea:preprint}, together with the smart $K$-trivial set, give a diamond class ($\textsf{JTH}^\diamond$) that lies strictly between the $K$-trivial and the strongly jump-traceable degrees. The covering result answers a question by Nies, by letting $\CC$ be the class of sets that are not Oberwolfach random:

\begin{corollary}\label{cor:diamond}
	The $K$-trivial c.e.\ sets form a diamond class. 
\end{corollary}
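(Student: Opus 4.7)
The plan is to verify that taking $\CC$ to be the class of sets that are not Oberwolfach random yields $\CC^\diamond$ equal to the $K$-trivial c.e.\ sets. Thus I need to establish two inclusions, and the work will be packaged by combining results already stated in the excerpt.

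For the inclusion from $K$-triviality into the diamond class, I would argue as follows. Suppose $A$ is c.e.\ and $K$-trivial, and let $X$ be any random set in $\CC$, i.e.\ a random set that is not Oberwolfach random. By Theorem~\ref{thm_OW_and_K_triv}(1), $X$ computes every $K$-trivial set, and in particular $X$ computes $A$. Hence $A \in \CC^\diamond$ directly from the definition, with no further work needed.

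For the reverse inclusion, I would use Corollary~\ref{cor:covering} to fix an incomplete, $\Delta^0_2$ random set $Z$ that computes every $K$-trivial set. By the right-to-left direction of Theorem~\ref{thm_OW_and_K_triv}(1), any random set that computes all $K$-trivials fails to be Oberwolfach random, so $Z \in \CC$. Now suppose $A$ is a c.e.\ set lying in $\CC^\diamond$. Then by the definition of the diamond class, $A \le_\Tur Z$. Since $Z$ is an incomplete random set, the Hirschfeldt--Nies--Stephan result (recalled earlier in the excerpt) applies: every c.e.\ set computable from an incomplete random set is a base for randomness, and the sets that are a base for randomness coincide with the $K$-trivial sets. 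Therefore $A$ is $K$-trivial.

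The main ``obstacle'' is really no obstacle at all on this level of exposition: all the heavy lifting has been done in the two cited results. Corollary~\ref{cor:covering} supplies the crucial single incomplete random set $Z$ serving as a universal witness, and Theorem~\ref{thm_OW_and_K_triv}(1) supplies the exact characterization of $\CC$ as those randoms that are above every $K$-trivial. The only thing to be careful about is to invoke the Hirschfeldt--Nies--Stephan theorem (rather than, say, Theorem~\ref{thm:TuringandlowforK}) when converting ``computable from an incomplete random'' into ``$K$-trivial'', since we want to keep the proof modular and independent of the decanter/golden-run machinery.
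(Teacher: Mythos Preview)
Your proof is correct and follows exactly the approach the paper indicates: take $\CC$ to be the class of non--Oberwolfach-random sets, use Theorem~\ref{thm_OW_and_K_triv}(1) for the inclusion of the $K$-trivials into $\CC^\diamond$, and for the reverse inclusion use Corollary~\ref{cor:covering} to obtain an incomplete random $Z\in\CC$, so that any $A\in\CC^\diamond$ is below $Z$ and hence $K$-trivial by Hirschfeldt--Nies--Stephan. The paper states only the choice of $\CC$ and the reliance on the covering result; you have correctly filled in the details.
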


Let $X\in 2^\omega$ be Martin-L\"of random. The following diagram summarizes the properties we have discussed:

%
%

\medskip

\begin{center}\small
\begin{tikzpicture}[every text node part/.style={align=center}, node distance=1cm]
	\node (lr) {$X$ is not\\LR-hard};
	\node[right=of lr] (ow) {$X$ is Oberwolfach\\random};
	\node[right=of ow] (d1) {$X$ is a density-\\one point};
	\node[right=of d1] (pd) {$X$ is a positive\\density point};
	\node[below=of ow] (kt) {$X$ does not compute\\every $K$-trivial};
	\node[below=of pd] (dr) {$X$ is Turing\\incomplete};

	\path (lr) edge[imp] (ow)
		(ow) edge[imp] (d1) edge[imp, <->] (kt)
		(pd) edge[imp, <->] (dr);
	\draw[color=white] (d1) -- (pd) coordinate[pos=0] (a0) coordinate[pos=1] (a1);
	\path ([yshift=3pt]a0) edge[imp] ([yshift=3pt]a1);
	\draw[imp, <-] ([yshift=-3pt]a0) to node {$\diagup$} ([yshift=-3pt]a1);
\end{tikzpicture}
\end{center}

\medskip

\noindent As mentioned above,  there is a Turing incomplete random that is $LR$-hard.  By \cite{Day.Miller:nd}, there is a ML-random positive density point that is not a density-one point.  The rightmost vertical equivalence is due to \cite{Bienvenu.Hoelzl.ea:12}  as discussed above. The other nontrivial implications were  obtained in \cite{Bienvenu.Greenberg.ea:preprint}.

\begin{question} {\rm  Determine which further implications hold for ML-random sets. } \end{question} 

In particular, we ask the following.

\begin{question} {\rm Is there an $LR$-hard Oberwolfach random set? Equivalently, does the collection of $K$-trivial sets strictly contain  the diamond class $\LRH^\diamond$?}  \end{question}

\begin{question} {\rm 
	What is the effective measure-theoretic characterization of the random sets for which Lebesgue's density theorem hold? Is it Oberwolfach randomness, or a weaker notion? How does it relate to $LR$-hardness?}
\end{question}

\bibliographystyle{plain}
\bibliography{BSL_bib}

\end{document}